\newtheorem{theorem}{Theorem}
\newtheorem{lemma}[theorem]{Lemma}
\newtheorem{corollary}[theorem]{Corollary}
\numberwithin{equation}{section}
\newcommand{\F}{\mathbb{F}}
\newcommand{\Q}{\mathbb{Q}}
\newcommand{\Z}{\mathbb{Z}}
\renewcommand{\O}{\EuScript{O}}
\newcommand{\GL}{\mathrm{GL}}
\newcommand{\GSp}{{\rm GSp}}
\newcommand{\Sp}{{\rm Sp}}
\newcommand{\Aut}{\mathrm{Aut}}
\newcommand{\Gal}{\mathrm{Gal}}
\newcommand{\Hom}{\mathrm{Hom}}
\newcommand{\Kbar}{\overline{K}}
\newcommand{\Aell}{A_\ell}
\newcommand{\Fellbar}{\overline{\F}_\ell}
\newcommand{\pr}[1]{\langle #1\rangle}
\newcommand{\rmk}{{\scshape Remark:~}}
\newcommand{\ideal}[1]{\mathfrak{{#1}}}
\newcommand{\p}{\mathfrak{p}}
\begin{document}

\title{An open image theorem for a general class of abelian varieties}
\author{Chris Hall}
\address{Department of Mathematics, University of Michigan at Ann Arbor}
\email{hallcj@umich.edu}
\subjclass[2000]{11G10,14K15}

\maketitle

\newcommand{\Neron}{N\'eron}

\begin{abstract}
\noindent
Let $K$ be a number field and $A/K$ be a polarized abelian variety with
absolutely trivial endomorphism ring.  We show that if the \Neron~model of
$A/K$ has at least one fiber with potential toric dimension one, then 
for almost all rational primes $\ell$, the Galois group of the splitting
field of the $\ell$-torsion of $A$ is $\GSp_{2g}(\Z/\ell)$.

\end{abstract}

\section{Introduction}

Let $K$ be a number field and $A/K$ a polarized abelian $g$-fold
with trivial $\Kbar$-endomorphism ring.  For each rational prime
$\ell$, let $\Aell$ denote the $\ell$-torsion of $A$ and $G_\ell$ the
Galois group of the splitting-field extension $K(\Aell)/K$.  If $g=1$, then
$A/K$ is an elliptic curve and a well-known theorem of Serre asserts that
$G_\ell$ is isomorphic to $\GSp_2(\Z/\ell)\simeq\GL_2(\Z/\ell)$ for all
sufficiently large $\ell$ \cite[Theorem 2]{S2}.  In a series of lectures and
letters Serre (cf. \cite[Corollaire au Th\'eor\`eme 3]{S4}) later showed how to extend the result to the
case when $g$ is odd, 2, or 6: if $\ell$ is sufficiently large, then
$G_\ell\simeq\GSp_{2g}(\Z/\ell)$.  However, for general $g$ it is an open
problem to show that $G_\ell$ is as big as possible for almost all $\ell$. 
In this paper we show that this is true when we assume an additional
hypothesis on the reduction of $A$, and our main theorem is the following.

\begin{theorem}
Suppose $A/K$ satisfies the following property:

\quad {\rm (T) : \begin{tabular}{p{4.5in}}
there is a finite extension $L/K$ so that the N\'eron model of $A/L$ over
the ring of integers $\O_L$ has a semistable fiber with toric-dimension one.
\end{tabular}}

\noindent
If $\ell$ is sufficiently large with respect to $A$ and $K$, then
$G_\ell\simeq \GSp_{2g}(\Z/\ell)$.  

\end{theorem}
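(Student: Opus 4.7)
The plan is to show, for all sufficiently large $\ell$, that $G_\ell$ contains $\Sp_{2g}(\Z/\ell)$. The Weil pairing already forces $G_\ell \subseteq \GSp_{2g}(\Z/\ell)$, and the similitude character of $G_\ell$ equals the mod-$\ell$ cyclotomic character, which is surjective onto $(\Z/\ell)^*$ because $K$ is a number field. Combined, these two inputs upgrade $\Sp_{2g}(\Z/\ell) \subseteq G_\ell$ to the desired equality $G_\ell = \GSp_{2g}(\Z/\ell)$.

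The first ingredient is to extract a transvection from hypothesis (T). At a prime $\p$ of $L$ at which $A/L$ has semistable reduction with toric part of dimension one, Grothendieck's monodromy theorem describes the action on the Tate module $T_\ell A$ of a generator $\sigma$ of tame inertia: it is unipotent, and the nilpotent logarithm $N = \sigma - 1$ satisfies $N^2 = 0$ with $\mathrm{rank}(N)$ equal to the toric dimension, hence equal to one. The symplectic compatibility of the monodromy pairing places $\sigma$ in $\Sp$, so its reduction mod $\ell$ is a symplectic transvection in the image of $\Gal(\Kbar/L)$. Pushing down to $K$ costs only passage to a power, and for $\ell$ coprime to the local ramification index of $L/K$ at $\p$ this power remains a transvection, so $G_\ell$ itself contains one.

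The second ingredient is irreducibility. Since $\End_{\Kbar}(A) = \Z$, Faltings' semisimplicity theorem and the determination of the commutant give that the $\ell$-adic representation on $T_\ell A \otimes \Q_\ell$ is absolutely irreducible for every $\ell$. A standard specialization argument then yields absolute irreducibility of $\bar{\rho}_\ell$ on $\Aell$ for all but finitely many $\ell$. Combined with the transvection above, one may invoke the classification of irreducible subgroups of $\Sp_{2g}(\F_\ell)$ containing a transvection: such a subgroup is either all of $\Sp_{2g}(\F_\ell)$ or falls into a short list of exceptions, each either of order bounded independently of $\ell$ or carrying extra stabilized structure (a nontrivial decomposition, a tensor factorization, or an invariant quadratic form).

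I expect the main obstacle to be eliminating these exceptional classes uniformly in $\ell$. The bounded-order families are discarded by cardinality, but the imprimitive, tensor-decomposable, and orthogonal alternatives persist at arbitrarily large $\ell$, and each corresponds to an extra Galois-stable structure on $\Aell$. The plan here is to lift each such structure to $T_\ell A \otimes \Q_\ell$ via a compatibility argument and to show that it produces endomorphisms beyond $\Z$, contradicting $\End_{\Kbar}(A) = \Z$. The crucial point is that (T) enters only in the first step, to furnish the transvection: without it the classification provides no leverage, which is precisely why the general open-image problem for $g$ not equal to odd, $2$, or $6$ remains open.
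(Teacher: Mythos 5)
Your high-level outline tracks the paper closely (use (T) to place a transvection in $G_\ell$, get irreducibility for almost all $\ell$ from Faltings, invoke a classification theorem for subgroups of $\Sp_{2g}(\F_\ell)$ involving transvections, then bootstrap to $\GSp$ via the cyclotomic character). But the actual proof in the paper does something meaningfully different from your plan at the crucial step, and your plan as stated has a real gap there.

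The paper does not classify arbitrary irreducible subgroups containing a transvection. Instead it takes $R \trianglelefteq G$ to be the normal subgroup \emph{generated} by transvections; by the Zalesski\u\i--Sere\v zkin theorem, groups generated by transvections are much more constrained, and the only obstruction to $R = \Sp(V)$ is that $R$ acts imprimitively: $V = \bigoplus_{g \in G/H} gW$ with each $R_g := R|_{gW}$ equal to $\Sp(gW)$, $R$ their central product, and $G \leq N_\Gamma(R) \cong \GSp(W) \wr S_n$ where $n = [V : W]$. So the problem reduces entirely to showing $n = 1$ for $\ell \gg 0$, and the tensor-product and orthogonal-form alternatives you worry about never arise.

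Here is the gap. You propose to rule out the imprimitive case by ``lifting the structure to $T_\ell A \otimes \Q_\ell$ and showing it produces endomorphisms beyond $\Z$.'' That does not work as stated: the block decomposition is a single-$\ell$ phenomenon, and when the permutation image $G \to S_n$ is nontrivial the blocks $gW$ are not individually Galois-stable, so no sub-abelian-variety, no isogeny factor, and no endomorphism of $A$ is produced. There is no evident compatibility across $\ell$ to lift, and nothing contradicting $\End_{\Kbar}(A) = \Z$. This is precisely the point where the paper's real technical input occurs: one must show the homomorphism $G \to S_n$ has trivial image. The paper proves (Lemma~\ref{lemma6}) that every inertia subgroup of $G$ lands in the kernel $N_0$ of $G \to S_n$. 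For primes away from $\ell$ this is easy (inertia is unipotent, $S_n$ has no $\ell$-torsion once $\ell > n$). For primes above $\ell$, inertia contains a tame quotient which, by Serre--Raynaud, is the image of a torus $T = \F_{\ell^d}^\times$ with bounded amplitude; the rigidity lemma for such tori (Section~\ref{sec::tori}) then forces $T$ into $N_0$ provided $\ell > en + 1$. Consequently the fixed field of $\ker(G \to S_n)$ is an everywhere-unramified extension of $K$ of degree bounded independently of $\ell$, and Hermite's theorem says there are only finitely many such; replacing $K$ by a fixed finite extension kills the $S_n$-image for all large $\ell$, and then irreducibility of $V$ forces $n = 1$. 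Your proposal omits this entire mechanism (inertia structure at $\ell$, rigidity of inertial tori, Hermite finiteness), and the endomorphism-producing substitute you suggest does not close the gap.

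One smaller point: your irreducibility step is fine in spirit, but the paper's argument is cleaner and worth noting---it does not pass through absolute irreducibility of the $\ell$-adic representation plus a specialization argument; it directly observes that a proper $G$-submodule of $\Aell$ would yield, for each bad $\ell$, a distinct member of the $K$-isogeny class of $A$, contradicting Faltings' finiteness.
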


The rest of this paper is devoted to a proof of the theorem.  In
section~\ref{sec::proof} we prove the theorem modulo a result of Serre on
the rigidity of inertial tori, and in section~\ref{sec::tori}
we prove the necessary rigidity result.

An example due to Mumford shows that one cannot remove hypothesis (T) from
the statement of our theorem \cite[Section 4]{M}.  More precisely, Mumford
constructed an abelian 4-fold with absolutely trivial endomorphism ring
such that $G_\ell$ does not contain $\Sp_{2g}(\Z/\ell)$ for infinitely many
$\ell$.  His 4-fold does not satisfy hypothesis (T) because it has
potentially good reduction everywhere; for infinitely many $\ell$ there are
no non-trivial unipotent elements $u\in G_\ell$ satisfying $(u-1)^2=0$, while
potential positive-dimension toric reduction would give rise to such
elements.  More importantly, Mumford's example fails to
satisfy the conclusion of the theorem because its so-called Mumford-Tate
group is strictly smaller than for those 4-folds addressed in the theorem
(cf.~\cite[annotation 4]{S3}).  However, the groups $G_\ell$ are as big as
possible for almost all $\ell$ once one takes into consideration the upper
bound imposed by the Mumford-Tate group, and for a general polarized
abelian $g$-fold $A/K$ it is conjectured that $G_\ell$ is almost always as
big as possible given the constraints imposed by the endomorphism ring and
Mumford-Tate group.  

\newcommand{\ord}{\mathrm{ord}}

While Mumford's example shows that an arbitrary abelian $g$-fold will not
satisfy the hypothesis (nor the conclusion) of the theorem, one can ask for
the likelihood that a ``random'' $A/K$ will have absolutely trivial
endomorphism ring and satisfy hypothesis (T).  For $g=1$, a necessary and
sufficient condition is that $j$-invariant does not lie in the ring of
integers of $K$.  For $g>1$, if $n=2g+2$ or $2g+1$ and $f(x)$ is a
degree-$n$ polynomial in $K[x]$ whose splitting field has Galois group
$S_n$, then Zarhin showed that the endomorphism ring of the Jacobian of the
hyperelliptic curve $y^2=f(x)$ is absolutely trivial \cite[Theorem 2.1]{Z}.  If moreover
there is a prime $\p$ in $K$ such that the reduction of $f(x)$ modulo $\p$
(is defined and) has $n-1$ distinct zeros (over an algebraic closure), one
of which is a double zero, then the Jacobian satisfies (T).

In an appendix to this paper E.~Kowalski shows that most monic polynomials in
$K[x]$ with integral coefficients satisfy both these properties, thus for
most hyperelliptic curves over $K$ the Jacobian satisfies the hypotheses of
the theorem.  Of course, for $g>2$ most polarized abelian $g$-folds $A/K$
do not arise as the Jacobians of hyperelliptic curves, so it is an open problem to
determine how often the hypotheses of the theorem are satisfied in general.



\subsection{Notation}

We use the notation $\ell\gg_{X} 0$ to mean that there is a constant
$\ell_0(X)$ which depends on the object $X$ and $\ell$ satisfies $\ell\geq
\ell_0(X)$.

\section{Proof of Main Theorem}\label{sec::proof}

Up to replacing $K$ by a finite extension $L/K$ we may assume $A/K$
satisfies (T) for $L=K$.  We fix an odd prime $\ell$ which is relatively prime
to the polarization degree of $A$.  We regard $V=\Aell$ as a vector space
over $\Z/\ell$ and write $\pr{\,,}:V\times V\to\mu_\ell$ for the Weil
pairing; the pairing exists because $A$ is polarized and it is
non-degenerate because $\ell$ is prime to the polarization degree.  If
$W\leq V$ is a subspace, then we write $W^\perp$ for the complement of $W$
with respect to $\pr{\,,}$.  We identify $\Gamma=\GSp(V)\leq\GL(V)$ with
the similitude subgroup of $\pr{\,,}$ and $\Sp(V)\leq\Gamma$ with the
isometry group.  There is short exact sequence
   $$ 1\to \Sp(V)\to \GSp(V)\overset{m}\to(\Z/\ell)^\times \to 1 $$
such that, for every $\gamma\in\Gamma$ and $x,y\in V$, we have
$\pr{\gamma x,\gamma y} =\pr{x,y}^{m(\gamma)}$.  The action of $G=G_\ell$
is compatible with $\pr{\,,}$, so there is an embedding $G\to\Gamma$ and
the theorem asserts it is an isomorphism if $\ell\gg_A 0$.  To prove the
theorem we will construct a subgroup $R\leq G$ which we can show
satisfies $R=\Sp(V)$ for $\ell\gg_A 0$, from which it will follow easily that
$G=\Gamma$ for $\ell\gg_{A,K} 0$.

\begin{lemma}
If $\ell\gg_A 0$, then $V$ is an irreducible $G$-module.
\end{lemma}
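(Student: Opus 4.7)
The plan is to deduce irreducibility from the hypothesis $\End_{\Kbar}(A) = \Z$ via Faltings' isogeny finiteness theorem; hypothesis (T) plays no role in this particular step. Since $\End_K(A)$ is a unital subring of $\End_{\Kbar}(A) = \Z$, we automatically have $\End_K(A) = \Z$ as well, so no preliminary field extension is required.

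Suppose for contradiction that $A[\ell]$ admits a proper nonzero $G$-stable subspace $W_\ell$ for infinitely many primes $\ell$, and set $k_\ell = \dim_{\F_\ell} W_\ell$, so that $1 \le k_\ell \le 2g-1$. The quotient isogeny $\phi_\ell \colon A \to B_\ell := A/W_\ell$ is then defined over $K$ and has degree $\ell^{k_\ell}$; in particular each $B_\ell$ is $K$-isogenous to $A$. By Faltings' isogeny theorem, the set of $K$-isomorphism classes of abelian varieties $K$-isogenous to $A$ is finite, so by pigeonhole I can choose distinct primes $\ell \ne \ell'$ in my infinite set with $B_\ell \cong B_{\ell'}$ over $K$. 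Fix such an isomorphism and regard both $\phi_\ell$ and $\phi_{\ell'}$ as isogenies $A \to B$ to a common target $B$.

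Because $\ker \phi_\ell \subseteq A[\ell]$, the multiplication-by-$\ell$ map factors as $[\ell]_A = \psi_\ell \circ \phi_\ell$ for some $\psi_\ell \colon B \to A$ of degree $\ell^{2g - k_\ell}$. The composition $f := \psi_\ell \circ \phi_{\ell'} \colon A \to A$ is then an endomorphism of $A$, nonzero because the composition of two isogenies is again an isogeny, and of degree $\ell^{2g - k_\ell}(\ell')^{k_{\ell'}}$. Since $\End_K(A) = \Z$, we must have $f = [n]$ for some nonzero $n \in \Z$, and since $\deg[n]_A = n^{2g}$, this yields $n^{2g} = \ell^{2g - k_\ell}(\ell')^{k_{\ell'}}$. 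Writing $|n| = \ell^a (\ell')^b$ with $a,b \ge 0$ then forces $2gb = k_{\ell'}$, which is impossible since $0 < k_{\ell'} < 2g$.

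The main obstacle is purely expository: one needs to invoke the correct form of Faltings' theorem (finiteness of isogeny classes over number fields) and keep track of degrees carefully. The conceptual point behind the argument is that $\End_K(A) = \Z$ rigidly forces every endomorphism of $A$ to have degree a perfect $(2g)$-th power, whereas a nontrivial invariant subspace of $A[\ell]$ combined with Faltings' finiteness would manufacture endomorphisms whose degree involves two distinct primes to forbidden exponents.
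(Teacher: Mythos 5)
Your proof is correct and follows essentially the same route as the paper's: use Faltings' finiteness of the $K$-isogeny class to find two isogenies $A\to B$ with kernels supported at distinct primes $\ell\neq\ell'$, compose one with a complementary isogeny to obtain an endomorphism whose degree $\ell^{2g-k_\ell}(\ell')^{k_{\ell'}}$ cannot be a $2g$-th power, contradicting $\End_{\Kbar}(A)=\Z$. The only cosmetic difference is that you land on an endomorphism of $A$ (via the explicit factorization $[\ell]_A=\psi_\ell\circ\phi_\ell$) whereas the paper's $\phi_1\circ\phi_2^t$ lands on an endomorphism of $B$; the degree bookkeeping and conclusion are identical.
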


\begin{proof}
If $W\leq V$ is a $G$-submodule, then the isogeny $A\to B=A/W$ is defined
over $K$.  If $\phi_1,\phi_2:A\to B$ were isogenies for distinct $\ell$,
then $\psi=\phi_1\circ\phi_2^t$ would be an endomorphism outside of $\Z$
because $\deg(\psi)\not\in\deg(\Z)$, so distinct $\ell$ give rise to
distinct elements of the $K$-isogeny class of $A$.  However, Faltings'
theorem implies there can only be finitely many abelian varieties in the
$K$-isogeny class of $A$ (cf.~\cite[Corollaire 2.8]{D}), so there are only
finitely many $\ell$ such that $V$ is reducible.
\end{proof}

We say $\gamma\in\Gamma$ is a {\it transvection} if it is unipotent and
$V^{\gamma=1}$ has codimension one.  The $\Gamma$-conjugate of a
transvection is a transvection, and we write $R\leq G$ for the normal
subgroup generated by the subset of transvections in $G$.  The proof of the
following lemma
shows that condition (T) is sufficient, but not necessary, to show that $R$
is non-trivial for almost all $\ell$.

\begin{lemma}
If $\ell\gg_A 0$, then $R$ is non-trivial.
\end{lemma}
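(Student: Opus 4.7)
The plan is to produce a transvection in $G$ from the inertia action at a prime of toric reduction. Having reduced to the case $L=K$, hypothesis (T) supplies a prime $\p\subset\O_K$ of residue characteristic $p$ such that the identity component $\mathcal{A}^0_\p$ of the special fiber of the \Neron~model of $A$ is a semiabelian variety, sitting in a short exact sequence $0\to T\to\mathcal{A}^0_\p\to B\to 0$ with $T$ a one-dimensional torus and $B$ an abelian variety of dimension $g-1$. Throughout I restrict to $\ell\gg_A 0$; in particular $\ell\ne p$ and $\ell$ is coprime to the order of the component group $\Phi_\p=\mathcal{A}_\p/\mathcal{A}^0_\p$.

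For such $\ell$, Grothendieck's criterion for semistable reduction implies that the inertia group $I=I_\p\subset\Gal(\Kbar/K)$ acts unipotently on $V=\Aell$. The standard \Neron~model argument (specialization plus smoothness) identifies
$$ V^I \;\cong\; \mathcal{A}_\p(\overline{\F}_p)[\ell] \;=\; \mathcal{A}^0_\p(\overline{\F}_p)[\ell], $$
the second equality using $\ell\nmid|\Phi_\p|$. Since $T(\overline{\F}_p)[\ell]$ has order $\ell^{\dim T}=\ell$ and $B(\overline{\F}_p)[\ell]$ has order $\ell^{2\dim B}=\ell^{2(g-1)}$, the right-hand side has $\F_\ell$-dimension $2g-1$, so $V^I$ is a hyperplane in $V$.

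Because $V^I\subsetneq V$, some $\sigma\in I$ acts non-trivially on $V$. Let $\gamma\in G$ be its image. By semistability $\gamma$ is unipotent, and the chain $V^I\subseteq V^{\gamma=1}\subsetneq V$ together with $\dim V^I=\dim V-1$ forces $V^{\gamma=1}=V^I$. Hence $\gamma$ is a transvection in $G$, so $R$ contains $\gamma\ne 1$ and is non-trivial. The hypothesis ``toric dimension one'' is used only in the last dimension count: a larger toric dimension $t$ would still yield unipotents in $G$ by the same mechanism, but with fixed subspaces of codimension $t>1$ rather than transvections. The principal technical input is the identification of $V^I$ with $\mathcal{A}^0_\p(\overline{\F}_p)[\ell]$, which rests on the smoothness of the connected \Neron~model (so its $\ell$-torsion is étale for $\ell\ne p$) and on $\ell\nmid|\Phi_\p|$ to absorb the component-group contribution.
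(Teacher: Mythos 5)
Your argument is correct and follows the same route as the paper: produce a transvection from the monodromy at the prime of toric-dimension-one reduction, using $\ell\nmid|\Phi_\p|$ to control the component group. The paper simply delegates the details (the identification $V^I\cong\mathcal{A}^0_\p(\overline{\F}_p)[\ell]$, the dimension count $\dim V^I=2g-1$, and the unipotence) to SGA 7 (2.1, 2.5, 3.5 of [G]), whereas you spell them out.
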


\begin{proof}
Suppose $\p$ is a prime in $\O_K$ over which $A$ has toric-dimension
one; $\p$ exists because $A$ satisfies (T) for $L=K$.  Then the monodromy
about $\p$ is a transvection provided $\ell$ does not divide the order of
the component group of the N\'eron model of $A$ over $\p$ (cf.~2.1, 2.5
and 3.5 of \cite{G}).  
\end{proof}

\newcommand{\G}{\mathbb{G}}
\newcommand{\R}{\EuScript{R}}

Suppose $R$ is non-trivial.  Let $W\leq V$ be a non-trivial irreducible
$R$-submodule and $H\leq G$ be the stabilizer of $W$. 

\begin{lemma}
If $V$ is an irreducible $G$-module, then $W\cap W^\perp=W^R=0$ and
$V=\bigoplus_{G/H} gW$.  
\end{lemma}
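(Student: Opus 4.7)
The plan is to dispose of the three claims in the order $W^R = 0$, $V = \bigoplus_{G/H} gW$, $W \cap W^\perp = 0$, each following from normality of $R$ in $G$, the $G$-irreducibility of $V$, and the transvection structure of $R$. For $W^R = 0$: the fixed subspace $V^R$ is $G$-stable since $R$ is normal, so by $G$-irreducibility it is $0$ or $V$; but $R$ contains a nontrivial transvection by the previous lemma, so $V^R \neq V$. Hence $V^R = 0$ and $W^R \subseteq V^R = 0$.

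For the decomposition $V = \bigoplus_{G/H} gW$: each $gW$ is $R$-stable (by normality of $R$) and irreducible as an $R$-module (since $w \mapsto gw$ intertwines the two $R$-actions up to the inner automorphism by $g$); the subspace $gW$ depends only on the coset $gH$, and distinct cosets give distinct subspaces (two distinct irreducible $R$-submodules intersect trivially). Their sum is $G$-stable and nonzero, hence all of $V$ by $G$-irreducibility. Equipping $W$ with its natural $H$-module structure (well-defined since $H$ stabilizes $W$ by definition), this identifies $V$ with $\mathrm{Ind}_H^G W$ via the Clifford--Mackey theorem, and directness of the sum is built into the induction.

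For $W \cap W^\perp = 0$: $W^\perp$ is $R$-stable because $R \subseteq \Sp(V)$ preserves both the pairing and the subspace $W$, so $W \cap W^\perp$ is an $R$-submodule of the irreducible $R$-module $W$, and is therefore $0$ or $W$. Suppose for contradiction that $W \cap W^\perp = W$, i.e., $W$ is totally isotropic. Every transvection $\gamma \in G$ lies in $R$ by construction and hence in $H$, so $\gamma W = W$; writing $\gamma(v) = v + c\pr{x_\gamma, v}x_\gamma$ for some $x_\gamma \in V$, the stabilization condition forces $x_\gamma \in W \cup W^\perp$, which reduces to $x_\gamma \in W^\perp$ since $W \subseteq W^\perp$. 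The identity $V^R = \bigl(\sum_\gamma \F_\ell \cdot x_\gamma\bigr)^\perp$, combined with $V^R = 0$, yields $\sum_\gamma \F_\ell \cdot x_\gamma = V$, so $V \subseteq W^\perp$, forcing $W \subseteq V^\perp = 0$, a contradiction. The main obstacle is this last step, where one must convert the bare fact that each transvection-generator stabilizes $W$ into the sharp inclusion $x_\gamma \in W^\perp$, then cash in $V^R = 0$ against non-degeneracy of the Weil pairing.
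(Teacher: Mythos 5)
Your arguments for $W^R=0$ and for $W\cap W^\perp=0$ are correct and self-contained, which is worth noting because the paper itself simply cites Lemma~3.2 of \cite{H} for this lemma. However, there is a genuine gap in your argument that the sum $\sum_{gH\in G/H}gW$ is direct. You invoke Clifford--Mackey to ``identify $V$ with $\mathrm{Ind}_H^G W$'', but Clifford theory gives $V\cong\mathrm{Ind}_I^G U$ where $I\supseteq H$ is the \emph{inertia group} (the stabilizer of the $R$-isomorphism class of $W$) and $U$ is the $W$-isotypic component; it does not in general yield $\mathrm{Ind}_H^G W$. If $I\supsetneq H$, i.e.\ some $g\notin H$ has $gW\cong W$ as $R$-modules without $gW=W$, then the isotypic component has multiplicity greater than $1$ and $\sum_{G/H}gW$ can fail to be direct even though the summands pairwise intersect in $0$ (pairwise trivial intersection does not imply independence of three or more subspaces). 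Asserting $V\cong\mathrm{Ind}_H^G W$ at this stage is essentially equivalent to the directness you are trying to prove, so the appeal is circular.

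The repair uses the symplectic form, and your proof of $W\cap W^\perp=0$ already contains the key observation; just reorder so that fact is available first (your argument for it does not use the decomposition). Then show $g_1W\subseteq(g_2W)^\perp$ whenever $g_1H\neq g_2H$: if the transvection $\gamma$ acts nontrivially on $g_2W$, then $(\gamma-1)V=\F_\ell x_\gamma$ lies in $g_2W$, so $(\gamma-1)g_1W\subseteq g_1W\cap g_2W=0$, hence $\gamma$ fixes $g_1W$ pointwise and $x_\gamma\perp g_1W$; the span of such $x_\gamma$ is a nonzero $R$-submodule of the irreducible $g_2W$, hence all of $g_2W$, giving $g_2W\perp g_1W$. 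Directness is now immediate: if $\sum_i v_i=0$ with $v_i\in g_iW$, pairing against $g_jW$ annihilates every term but $v_j$, so $v_j\in g_jW\cap(g_jW)^\perp=0$. This is exactly the transvection bookkeeping the paper carries out one lemma later to show $[R_{g_1},R_{g_2}]=1$, so folding it in here costs nothing and closes the gap.
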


\begin{proof}
This follows from (the proof of) lemma 3.2 of \cite{H} because
$(gW)^\perp=g(W^\perp)$ for all $g\in\Gamma$.
\end{proof}

Suppose $V$ is an irreducible $G$-module, and let $R_g\leq R$ be the
subgroup generated by the transvections which act non-trivially on $gW$. 
The image of $R_g=gR_1g^{-1}$ in $\Sp(gW)$ is non-trivial, irreducible, and
generated by transvections, so is all of $\Sp(gW)$ by \cite[Main Theorem]{ZS}.

\begin{lemma}
If $g_1H\neq g_2H$ as cosets, then the commutator $[R_{g_1},R_{g_2}]$ is trivial.
\end{lemma}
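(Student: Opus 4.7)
The plan is to exploit the direct-sum decomposition $V=\bigoplus_{G/H} gW$ from the previous lemma and show that a transvection in $R_{g_1}$ and one in $R_{g_2}$ act on ``disjoint'' summands, hence commute.

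First I would check that each summand $gW$ is $R$-stable: since $R$ is normal in $G$ and $W$ is $R$-stable by choice, we have $R\cdot gW=g(g^{-1}Rg)W=gRW=gW$. In particular every element of $R$ — and a fortiori every transvection in $R$ — preserves the decomposition $V=\bigoplus_{G/H}gW$.

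Next I would establish the key observation: if $\tau\in R_{g_1}$ is a transvection, then $\tau$ acts as the identity on every summand $gW$ with $gH\neq g_1H$. The point is that $\tau-1$ has rank one on $V$ and preserves each summand, so its image is contained in a single summand; since $\tau$ was chosen to act non-trivially on $g_1W$, that summand must be $g_1W$, forcing $(\tau-1)|_{gW}=0$ for $gH\neq g_1H$. Similarly, every generating transvection $\sigma$ of $R_{g_2}$ acts trivially on every summand other than $g_2W$, in particular on $g_1W$.

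Assuming $g_1H\neq g_2H$, the two transvections $\tau$ and $\sigma$ then act non-trivially on distinct summands of the direct-sum decomposition and as the identity elsewhere, so they commute summand-by-summand. Since $R_{g_1}$ and $R_{g_2}$ are generated by such transvections and commutation is preserved under generation, this yields $[R_{g_1},R_{g_2}]=1$. The only genuinely non-trivial input is the rank-one/direct-sum argument in the previous paragraph; everything else is bookkeeping, and I do not foresee a serious obstacle.
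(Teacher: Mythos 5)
Your proof is correct and follows essentially the same route as the paper's: use the rank-one property of a transvection together with the fact that each summand $gW$ is $R$-stable (which you helpfully make explicit, whereas the paper uses it implicitly) to show that a generating transvection of $R_{g_1}$ acts trivially on every $gW$ other than $g_1W$, and then observe that generating transvections of $R_{g_1}$ and $R_{g_2}$ commute summand-by-summand. One small wording slip: not every transvection lying in the group $R_{g_1}$ need act non-trivially on $g_1W$; the argument applies to the \emph{generating} transvections of $R_{g_1}$, which is what you in fact use in the final step, so the proof is fine.
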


\begin{proof}
Note, $g_1W=g_2W$ if and only if $g_1H=g_2H$.  If $\gamma\in R$ is a
transvection which acts non-trivially on $g_1W$, then $(\gamma-1)V\leq
g_1W$.  Thus if $g_1W\neq g_2W$, then $(\gamma-1)g_2W$ lies in $g_1W\cap
g_2W=0$, so $R_{g_1}$ acts trivially on $g_2W$.  In particular, if
$\gamma_1\in R_{g_1}$, $\gamma_2\in R_{g_2}$ are transvections and
$g_1W\neq g_2W$, then for each $gW$, at least one of $\gamma_1,\gamma_2$
acts trivially on $gW$, so the restrictions of $\gamma_1,\gamma_2$ to $gW$
commute (for every coset $gH$), hence they commute on all of $V$.
\end{proof}

The lemma implies $R_g=\Sp(gW)$ for all $gH$ and $R$ is the central product
$\prod_{G/H} R_g$.  Therefore, if we write $n=[V:W]$, then $N_\Gamma(R)$ is
isomorphic to the wreath product $\GSp(W)\wr S_n$ and $G\leq N_\Gamma(R)$.
The next step is to show that $n=1$.

Let $N_0\leq N_\Gamma(R)$ be the kernel of $N_\Gamma(R)\to S_n$.


\begin{lemma}\label{lemma6}
Let $\pi$ be a prime in $\O_K$ and let $e$ be the ramification index of
$\pi$ over $\Q$.  If $\ell>en+1$ and $I\leq G$ is the inertia subgroup
of a prime in $K(A_\ell)$ over $\pi$, then the image of $I$ in $S_n$
is trivial.
\end{lemma}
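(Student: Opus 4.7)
The plan is to argue by contradiction: assume some $\sigma\in I$ has non-trivial image $\bar\sigma\in S_n$ and derive an incompatibility with the local structure of the inertia action on $\Aell$.

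First I would reduce to a normal form. Since the image of $I$ in $S_n$ is finite, replacing $\sigma$ by a suitable power allows one to assume $\bar\sigma$ has prime order $q$; necessarily $q\leq n$, and since $\ell>en+1>n$ we also have $q\neq\ell$.

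Second, I would extract eigenvalue information from $\bar\sigma$. Choose a $q$-cycle of $\bar\sigma$ acting on $G/H$ with corresponding subspaces $W_1,\ldots,W_q$ (so $\sigma(W_i)=W_{i+1}$ cyclically), and set $U=W_1\oplus\cdots\oplus W_q$. Pick any basis of $W_1$ and transport it via $\sigma,\sigma^2,\ldots,\sigma^{q-1}$ to bases of the other $W_i$; the matrix of $\sigma|_U$ then becomes block-circulant and a direct computation gives its characteristic polynomial as $\det(X^q I-M)$, where $M=\sigma^q|_{W_1}$. Consequently the set of eigenvalues of $\sigma|_U$ in $\overline{\F}_\ell^\times$ is a union of cosets of $\mu_q$, and since $q\neq\ell$ and $\mu_q\subset\overline{\F}_\ell^\times$ this forces some eigenvalue of $\sigma$ acting on $V$ to have multiplicative order divisible by $q$.

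The core of the argument is to show that this eigenvalue behavior is incompatible with $\sigma$ lying in the inertia subgroup at $\pi$ once $\ell>en+1$. Here I would invoke the local theory of inertia acting on $\Aell$: if the residue characteristic of $\pi$ is distinct from $\ell$, Grothendieck's monodromy theorem gives that an open subgroup $I_1\leq I$ of controlled index acts unipotently on the Tate module, and the tame semisimple quotient acts via fundamental characters of bounded level; if the residue characteristic equals $\ell$, Raynaud's classification of finite flat group schemes (or Fontaine-Laffaille theory) expresses the tame semisimple characters as monomials in fundamental characters with exponents bounded by the absolute ramification $e$. In either case, the bound $\ell>en+1$ is exactly what is needed to rule out any inertia character of order divisible by a prime $q\leq n$, contradicting the eigenvalue produced in the previous step.

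The main obstacle is this last step: converting the general local-monodromy framework into the sharp numerical statement that primes $q\leq n$ cannot divide the order of an inertia eigenvalue when $\ell>en+1$. The form of the bound strongly suggests that Serre's analysis of tame fundamental characters, with exponents constrained by the ramification $e$, is the right tool, applied uniformly across the residue-characteristic cases.
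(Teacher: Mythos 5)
Your approach differs from the paper's in a basic way: the paper never looks at eigenvalues of individual inertia elements, but instead proves a rigidity lemma (Lemma~\ref{lemma::rig}) asserting that if $\rho:T\to\GL(V)$ has amplitude $e$ and a subgroup $S\le T$ of index $c$ with $ec<\ell-1$ centralizes a semisimple element $s$, then all of $\rho(T)$ does. It applies this with $S=\ker(T\to S_n)$ (of index at most $n$ since $T$ is cyclic) and $s\in Z(N_0)$; from $C_{N_\Gamma(R)}(Z(N_0))=N_0$ it concludes $I\le N_0$. Your block-circulant computation is a reasonable alternative way to detect the obstruction, but as written the argument has a genuine gap.

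The problem is in the final step. You claim that $\ell>en+1$ rules out an inertia character whose value has order divisible by a prime $q\le n$. This is false. The cyclotomic character $\psi$ (amplitude $1$) evaluated at a tame generator has order $\ell-1$, and $\ell-1$ is usually divisible by many primes $\le n$; nothing in the hypothesis $\ell>en+1$ prevents this. What \emph{would} give a contradiction via the amplitude bound is a character of order \emph{exactly} $q$: if $\chi=\prod_k\psi_k^{c_k}$ with $|c_k|\le e$ corresponds to the residue $m_0=\sum c_k\ell^{k-1}$, then $|m_0|\le e(\ell^d-1)/(\ell-1)$, while order exactly $q$ forces $m_0$ to be a nonzero multiple of $(\ell^d-1)/q$, so $|m_0|\ge(\ell^d-1)/q$; combining these gives $\ell-1\le eq\le en$, contradicting $\ell>en+1$. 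But your setup does not deliver "order exactly $q$": your eigenvalue ratio gives $\chi(t)$ of order $q$, which only implies $q\mid\mathrm{ord}(\chi)$ unless $t$ generates $T$ — and your reduction to $\bar\sigma$ of prime order precisely destroys that, since then $\sigma$ is typically a proper power of a generator of $C$, so any lift $t$ is a proper power in $T$. This is where the argument fails.

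The approach can be repaired, and then it really is a different proof. Do not pass to prime order: instead take $\sigma$ a generator of the cyclic prime-to-$\ell$ complement $C$, lift it to a generator $t$ of $T$ (possible by the construction of the tame quotient), and apply your block-circulant computation to a single nontrivial cycle of $\bar\sigma$, of some length $q'\le n$ (not necessarily prime). Then some ratio $\chi=\chi_i\chi_j^{-1}$ of two characters appearing in $\rho$ satisfies $\chi(t)=\zeta$ a primitive $q'$-th root of unity, and since $t$ generates $T$ this forces $\mathrm{ord}(\chi)=q'$. Now the amplitude bound gives $\ell-1\le eq'\le en$, the desired contradiction. This repaired argument recovers the paper's bound $\ell>en+1$ and avoids Lemma~\ref{lemma::rig} entirely; what it pays for this is a somewhat more delicate bookkeeping with the torus generator and the $\ell$-restricted representatives of the difference characters, whereas the paper's rigidity lemma packages all of that once and for all via the $c$-th power map on $\underline{T}$.

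Also be aware that in the residue-characteristic-different-from-$\ell$ case, the paper's treatment is simpler than what you propose: for $\ell>n$ the inertia image is an $\ell$-group (trivial or generated by a unipotent), so its image in $S_n$ is automatically trivial; one does not need the full quasi-unipotence machinery with a bounded-index open subgroup.
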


\begin{proof}
If $\ell>n$, then $S_n$ has no elements of order $\ell$, so the image in
$S_n$ of the $\ell$-Sylow subgroup of $I$ must be trivial.  In particular,
if $\pi$ does not lie over $\ell$, then $I$ is an $\ell$-group because it
is trivial or generated by a unipotent element, so the image of $I$ in $S_n$
is trivial.  Therefore we may suppose $\pi$ lies over $\ell$.  Let $C\leq
I$ be a complement of the $\ell$-Sylow subgroup; it is a cyclic subgroup of
order prime to $\ell$.  By section 1.13 of \cite{S1} (following \cite{R}),
there is a finite extension $\F_{\ell^d}/\F_\ell$ and a surjective
homomorphism $T=\F_{\ell^d}^\times\to C$ so that the representation
$T\to\GL(V)$ has amplitude at most $e$; see section~\ref{sec::tori} for the
definition of amplitude.  The kernel of $T\to S_n$ has index at most $n$ in
$T$ and it commutes with $Z(N_0)$ (because it lies in $N_0$), so
lemma~\ref{lemma::rig} of section~\ref{sec::tori} implies all of $T$
commutes with $Z(N_0)$ because $\ell>en+1$.  In particular, the centralizer
of $Z(N_0)$ in $N_\Gamma(R)$ is $N_0$, so $C$, the image of $T\to\GL(V)$,
and $I$ lie in $N_0$.  
\end{proof}

The lemma implies that the fixed field of the kernel of $G\to S_n$
is unramified and has uniformly bounded degree over $K$.  By a theorem of
Hermite, there are only finitely many such extensions, so up to replacing
$K$ by a finite extension we may assume that the image of $G$ in $S_n$ is
trivial (for all $\ell\gg_A 0$).  Therefore $G\leq N_0=\prod_{G/H} \GSp(gW)$
and hence $n=1$ because $G$ acts irreducibly, so $W=V$ and $R=\Sp(V)$.  Once
we know that $R$ is big, the following lemma completes the proof of the
theorem.

\begin{lemma}
If $R=\Sp(V)$ and $\ell\gg_K 0$, then $G=\Gamma$.
\end{lemma}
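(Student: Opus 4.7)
The plan is to exploit the similitude character together with the Galois-equivariance of the Weil pairing. Since $R = \Sp(V) \leq G \leq \Gamma = \GSp(V)$ and $\Sp(V)$ is the kernel of $m: \Gamma \to (\Z/\ell)^\times$, the whole question reduces to showing that the restriction $m|_G: G \to (\Z/\ell)^\times$ is surjective for $\ell \gg_K 0$.

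First I would identify $m|_G$ with the $\ell$-cyclotomic character. The Weil pairing $\pr{\,,}$ takes values in $\mu_\ell$, and for every $\sigma \in \Gal(\Kbar/K)$ and $x,y \in V = A_\ell$ we have $\pr{\sigma x, \sigma y} = \sigma\pr{x,y} = \pr{x,y}^{\chi_\ell(\sigma)}$, where $\chi_\ell:\Gal(\Kbar/K)\to(\Z/\ell)^\times$ is the mod-$\ell$ cyclotomic character. Comparing with the defining property of $m$ in the exact sequence $1 \to \Sp(V) \to \GSp(V) \xrightarrow{m} (\Z/\ell)^\times \to 1$, the composition $m \circ (\Gal(\Kbar/K) \to G)$ is precisely $\chi_\ell$. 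In particular, $m(G) \supseteq \chi_\ell(\Gal(\Kbar/K)) = \Gal(K(\mu_\ell)/K)$.

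Next I would invoke a standard fact from algebraic number theory: the image $\Gal(K(\mu_\ell)/K)$ inside $(\Z/\ell)^\times = \Gal(\Q(\mu_\ell)/\Q)$ has index $[K \cap \Q(\mu_\ell) : \Q]$, and since $K/\Q$ is a fixed number field of finite degree, this intersection equals $\Q$ for all but finitely many $\ell$ (for example, for every $\ell$ unramified in $K$ and larger than the degree of $K$). Hence for $\ell \gg_K 0$, the cyclotomic character $\chi_\ell$ is surjective onto $(\Z/\ell)^\times$, so $m(G) = (\Z/\ell)^\times$.

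Combining the two points, $G$ surjects onto $\Gamma/\Sp(V)$ while already containing $R = \Sp(V)$, so $G = \Gamma$ as desired. There is essentially no obstacle here beyond the cyclotomic surjectivity; the only thing to be careful about is verifying that the constant $\ell_0$ produced this way depends only on $K$ (through the degree and ramification of $K/\Q$), consistent with the notation $\ell\gg_K 0$ in the statement.
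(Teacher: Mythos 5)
Your proof is correct and follows the same route as the paper: identify the similitude character restricted to $G$ with the mod-$\ell$ cyclotomic character via Galois-equivariance of the Weil pairing, then use that $K$ and $\Q(\mu_\ell)$ are linearly disjoint over $\Q$ for $\ell\gg_K 0$ to get surjectivity onto $(\Z/\ell)^\times$. You have simply spelled out the steps the paper leaves implicit.
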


\begin{proof}
If $\ell\gg_K 0$, then $K$ and $\Q(\mu_\ell)$ are disjoint extensions of
$\Q$.  On the other hand, if $R=\Sp(V)$, then $G/R$ is the Galois group of
$K(\mu_\ell)/K$, so must be all of $(\Z/\ell)^\times$ for $\ell\gg_K 0$.
\end{proof}

\rmk Most of the above carries through if we replace $K$ by a global field of
characteristic $p>0$.  One key difference is that $G_\ell$ is no longer equal to
$\GSp_{2g}(\Z/\ell)$ for all $\ell\gg_A 0$, but it does contain
$\Sp_{2g}(\Z/\ell)$ for all $\ell\gg_A 0$.  Another difference is that the argument in lemma~\ref{lemma6} is made simpler
by the fact that there are no inertial tori to contend with for $\ell\neq p$.


\section{Rigidity of Tori}\label{sec::tori}

\newcommand{\Tbar}{\underline{T}}
\newcommand{\rhobar}{\underline{\rho}}
\newcommand{\GLbar}{\underline{\GL}}
\newcommand{\amp}{\mathrm{amp}}

Let $T$ be the multiplicative group of a finite extension
$\F_{\ell^d}/\F_\ell$.  We regard $T$ as the set of $\F_\ell$-points
of the algebraic torus $\Tbar/\F_\ell$ given by the Weil restriction
of scalars of the split one-dimensional torus $\G_m/\F_{\ell^d}$.
The $d$ fundamental characters $\psi_1,\ldots,\psi_d:\Tbar\to\G_m$
corresponding to the $d$ embeddings $\F_{\ell^d}\to\Fellbar$ form a basis
for the character group $\Hom_{\,\Fellbar}(\Tbar,\G_m)$.  We define the
amplitude of a character $\chi=\prod_i \psi_i^{e_i}$ as $\max_i(e_i)$,
and we say $\chi$ is $\ell$-restricted if $0\leq e_i\leq \ell-1$ for
all $i$ and $e_i<\ell-1$ for some $i$ (cf.~\cite[section 1.7]{S1}
and \cite[annotation 5]{S4}).

Let $\F_\lambda/\F_\ell$ be a finite extension and $V/\F_\lambda$
be a finite-dimensional vector space.  We say a representation
$\rho:\Tbar\to\GLbar(V)$ of algebraic groups over $\F_\lambda$ is
$\ell$-restricted if all its characters are $\ell$-restricted.  Every
representation $\rho:T\to\GL(V)$ extends uniquely to an $\ell$-restricted
representation $\rho:\Tbar\to\GLbar(V)$, and we define the amplitude of
$\rho$ as the maximum of the amplitudes of its characters.

\begin{lemma}
    Let $s\in\GL(V)$ be a semisimple element and $\rho:T\to\GL(V)$ be a
    representation of amplitude $e<\ell-1$.  If $\rho(T)$ commutes with
    $s$, then $\rho(\Tbar)$ commutes with $s$ in $\GLbar(V)$.
\end{lemma}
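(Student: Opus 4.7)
The plan is to diagonalise everything over $\Fellbar$ and to show that the amplitude hypothesis $e<\ell-1$ forces the $\Tbar$-weight decomposition of $V$ to coincide with its joint-eigenspace decomposition under $T$; once that is in place, the conclusion will be essentially formal.

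First I would extend scalars, setting $\bar V=V\otimes_{\F_\lambda}\Fellbar$, and use that $\Tbar$ splits over $\Fellbar$ to obtain a weight decomposition $\bar V=\bigoplus_\chi \bar V_\chi$ indexed by the $\ell$-restricted characters $\chi=\prod_i\psi_i^{e_i}$ of amplitude at most $e$ that occur in $\rho$. Simultaneously, $T$ acts diagonally on $\bar V$, producing a joint-eigenspace decomposition $\bar V=\bigoplus_\mu \bar V^{(\mu)}$ indexed by characters $\mu\colon T\to\Fellbar^\times$. One clearly has $\bar V_\chi\subseteq \bar V^{(\chi|_T)}$, so the whole problem reduces to showing that the restriction map $\chi\mapsto \chi|_T$ is injective on amplitude-$\le e$ characters, i.e.\ that these two partitions of $\bar V$ agree.

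The key step---and the only point at which the hypothesis $e<\ell-1$ will be used---is this injectivity claim. Since $\psi_i$ restricts on $T=\F_{\ell^d}^\times$ to the map $t\mapsto t^{\ell^{i-1}}$, two characters $\chi,\chi'$ of amplitude at most $e$ agree on $T$ exactly when $\sum_i (e_i-e_i')\ell^{i-1}\equiv 0\pmod{\ell^d-1}$. I would bound the absolute value of the left side by $e(\ell^d-1)/(\ell-1)$, which is strictly less than $\ell^d-1$ precisely because $e<\ell-1$; this forces the sum to vanish as an integer, and then uniqueness of base-$\ell$ expansions with (signed) digits bounded in absolute value by $\ell-2<\ell$ gives $e_i=e_i'$ for every $i$.

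With injectivity in hand, each $\bar V_\chi$ is equal to the joint $T$-eigenspace $\bar V^{(\chi|_T)}$, so the hypothesis that $s$ commutes with $\rho(T)$ makes $s$ preserve every $\bar V_\chi$. On each such space $\rho(\Tbar)$ acts by the single scalar $\chi$, which automatically commutes with the restriction $s|_{\bar V_\chi}$; reassembling across $\chi$ yields $[s,\rho(\Tbar)]=1$ in $\GLbar(V)$, as required. The main obstacle is the arithmetic injectivity lemma above, but it is handled cleanly by the base-$\ell$ size bound; the semisimplicity of $s$ is not actually needed for this argument, though it is a convenient standing hypothesis in the intended application.
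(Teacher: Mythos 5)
Your proof is correct but takes a genuinely different route from the paper's. The paper decomposes $V$ into $s$-eigenspaces $V_j$ (using semisimplicity of $s$), notes that $T$ preserves each $V_j$ since it commutes with $s$, and then invokes the \emph{uniqueness} of $\ell$-restricted extensions: the product of the $\ell$-restricted extensions $\rho_j\colon\Tbar\to\GLbar(V_j)$ is itself an $\ell$-restricted extension of $\rho$, hence equals $\rho\colon\Tbar\to\GLbar(V)$, and it manifestly commutes with the scalar $s_j$ on each $V_j$. You instead decompose $\bar V$ into $\Tbar$-weight spaces and prove directly, via a base-$\ell$ digit estimate, that restriction to $T$ is injective on characters of amplitude at most $e<\ell-1$, so the $\Tbar$-weight spaces coincide with the $T$-joint-eigenspaces; then $s$ preserves each weight space merely because it commutes with $\rho(T)$, and on each one $\rho(\Tbar)$ is scalar. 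Your route makes the arithmetic role of the bound $e<\ell-1$ fully explicit, and, as you observe, it does not actually require $s$ to be semisimple, whereas the paper's argument does. The two proofs are ultimately leaning on the same underlying fact from opposite directions: the injectivity you establish is a restriction of the bijection between characters of $T$ and $\ell$-restricted characters of $\Tbar$, which is precisely what gives the uniqueness of $\ell$-restricted extensions that the paper invokes.
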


\begin{proof}
Up to a base change by a finite extension of $\F_\lambda$, we may assume
that $s$ is diagonalizable in $\GL(V)$.  Thus there is a decomposition
$V=\oplus_j V_j$ so that $s$ acts on $V_j$ via an element $s_j\in
Z(\GL(V_j))$ and $T$ preserves the decomposition because it commutes
with $s$.  The amplitude of the restriction $\rho_j:T\to\GL(V_j)$
is at most $e$.  If we write $\rho_j:\Tbar\to\GLbar(V_j)$ for the
$\ell$-restricted representation corresponding to $\rho_j$, then
$\rho_j(\Tbar)$ commutes with $s_j$ because $s_j$ is a scalar.  In
particular, the composition of the product representation
$\Tbar\to\prod_j\GLbar(V_j)$ with the obvious embedding
$\prod_j\GLbar(V_j)\to\GLbar(V)$ is an $\ell$-restricted representation
extending $\rho:T\to\GL(V)$, so it must be $\rho:\Tbar\to\GLbar(V)$ and
hence $\rho(\Tbar)$ commutes with $s$.  
\end{proof}

The power of the previous lemma is that it allows us to show that
representations $\rho:T\to\GL(V)$ are `rigid' if they have sufficiently
small amplitude (cf.~\cite{S4}).

\begin{lemma}\label{lemma::rig}
    Let $s\in\GL(V)$ be a semisimple element and $\rho:T\to\GL(V)$ be a
    representation of amplitude $e$.  If $S\leq T$ is a subgroup such that
    $\rho(S)$ commutes with $s$ in $\GL(V)$ and $e\cdot [T:S]<\ell-1$,
    then $\rho(T)$ commutes with $s$ in $\GLbar(V)$.
\end{lemma}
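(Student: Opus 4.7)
The plan is to reduce to the previous lemma by pre-composing $\rho$ with the $m$-th power map, where $m = [T:S]$. Since $T$ is abelian and $|T/S| = m$, every element $t^m$ lies in $S$ for $t \in T$, so the representation $\rho' : T \to \GL(V)$ defined by $\rho'(t) = \rho(t^m)$ has image contained in $\rho(S)$ and therefore commutes with $s$. The goal is then to verify that $\rho'$ has sufficiently small amplitude to apply the previous lemma, and finally to transfer the resulting conclusion back to $\rho$ itself.

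The main step is the amplitude computation. If $\chi = \prod_i \psi_i^{e_i}$ is a character of the $\ell$-restricted extension of $\rho$ with each $e_i \leq e$, then pulling back along $[m]_{\Tbar} : \Tbar \to \Tbar$ gives $\chi \circ [m]_{\Tbar} = \prod_i \psi_i^{m e_i}$. The hypothesis $e \cdot [T:S] < \ell - 1$ guarantees that $0 \leq m e_i < \ell - 1$ for every $i$, so this expression is already in $\ell$-restricted form. By uniqueness of the $\ell$-restricted extension, the composition $\rho \circ [m]_{\Tbar} : \Tbar \to \GLbar(V)$ is the $\ell$-restricted extension of $\rho'$, and its amplitude is at most $e m < \ell - 1$. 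The previous lemma, applied to $\rho'$ and $s$, then yields that $\rho'(\Tbar)$ commutes with $s$.

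Finally, $\Tbar$ is a connected algebraic torus and $m$ divides $|T| = \ell^d - 1$, so $\gcd(m, \ell) = 1$ and the $m$-th power isogeny $[m]_{\Tbar}$ is surjective on $\Fellbar$-points. Consequently $\rho(\Tbar) = \rho(\Tbar^m) = \rho'(\Tbar)$ commutes with $s$, which is the desired conclusion. The one place requiring care is the amplitude bookkeeping in the middle step: composing with $[m]$ could in principle introduce wrap-around in the base-$\ell$ digit representation of exponents, which would destroy control over the amplitude of $\rho'$; the numerical hypothesis $e \cdot [T:S] < \ell - 1$ is precisely what rules this out and makes the reduction clean.
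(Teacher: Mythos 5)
Your proof is correct and follows essentially the same route as the paper: pre-compose $\rho$ with the $[T:S]$-th power map, check that the resulting representation still has an $\ell$-restricted extension of amplitude below $\ell-1$ (so that the extension is genuinely the composite $\rho\circ[m]_{\Tbar}$), apply the previous lemma, and conclude by surjectivity of the power isogeny on $\Tbar(\Fellbar)$. The only cosmetic difference is the justification of surjectivity — you use $\gcd(m,\ell)=1$ while the paper cites $0<c<\ell-1$; both are more than sufficient since $\G_m(\Fellbar)$ is divisible for any positive exponent.
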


\begin{proof}
Let $c=[T:S]$ and $\rho^c:T\to\GL(V)$ denote the composition of $\rho$
with the $c$th-power-map $c:T\to T$.  By assumption $\rho^c$ has
amplitude $e\cdot c<\ell-1$, hence the corresponding $\ell$-restricted
representation $\rho^c:\Tbar\to\GLbar(V)$ is the composition of the
$\ell$-restricted representation $\rho:\Tbar\to\GLbar(V)$ with the
$c$th-power-map $c:\Tbar\to\Tbar$.  Moreover, $\rho^c(T)\leq\rho(S)$
commutes with $s$ in $\GL(V)$, so by the previous lemma $\rho^c(\Tbar)$
commutes with $s$ in $\GLbar(V)$.  In particular, $c:\Tbar\to\Tbar$
is surjective because $0<c<\ell-1$, hence $\rho(\Tbar)$ and a foriori
$\rho(T)$ commute with $s$.
\end{proof}

\section{Acknowledgements}

We gratefully acknowledge helpful conversations with Serre, and in
particular, for clarifications with regard to initial tori.  We also
acknowledge helpful conversations with N.M.~Katz and E.~Kowalski.


\section*{Appendix: Most hyperelliptic curves have big monodromy}

{\centerline{Emmanuel Kowalski\footnote{ ETH Z\"urich - DMATH, {\tt kowalski@math.ethz.ch}}}}

\bigskip

\newtheorem{proposition}[theorem]{Proposition}

\theoremstyle{remark}
\newtheorem{remark}[theorem]{Remark}

\def\sumb{\mathop{\sum \Bigl.^{\flat}}\limits}
\newcommand{\mods}[1]{\,(\mathrm{mod}\,{#1})}
\newcommand{\Fp}{\mathbf{F}}
\newcommand{\ra}{\rightarrow}

Let $k/\Q$ be a number field and $\Z_k$ its ring of integers. Let
$f\in \Z_k[X]$ be a monic squarefree polynomial of degree $n=2g+2$ or
$2g+1$ for some integer $g\geq 1$, and let $C_f/k$ be the (smooth,
projective) hyperelliptic curve of genus $g$ with affine equation
$$
C_f\,:\, y^2=f(x),
$$
and $J_f$ its jacobian.
\par
In the previous text, C. Hall has shown that the image of the Galois
representation
$$
\rho_{f,\ell}\,:\, \Gal(\bar{k}/k)\ra
\Aut(J_f[\ell](\bar{k}))\simeq \Fp_{\ell}^{2g}
$$
on the $\ell$-torsion points of $J_f$ is as big as possible for almost
all primes $\ell$, if the following two (sufficient) conditions hold:
\par
(1) the endomorphism ring of $J_f$ is $\Z$;
\par
(2) for some prime ideal $\p\subset \Z_k$, the fiber over
$\p$ of the N\'eron model of $C_f$ is a smooth curve except for
a single ordinary double point.
\par
These conditions can be translated concretely in terms of the
polynomial $f$, and are implied by:
\par
(1') the Galois group of the splitting field of $f$ is the full
symmetric group $\mathfrak{S}_n$ (this is due to a result of
Zarhin~\cite{zarhin}, which shows that this condition implies (1));
\par
(2') for some prime ideal $\p\subset \Z_k$, $f$ factors in
$\Fp_{\p}=\Z_k/\p\Z_k$ as
$f=f_1f_2$ where $f_i\in \Fp_{\p}[X]$ are relatively prime
polynomials such that
$f_1=(X-\alpha)^2$ for some $\alpha\in \Fp_{\p}$ and $f_2$ is
squarefree of degree $n-2$; indeed, this implies (2).
\par
In this note, we show that, in some sense, ``most'' polynomials $f$
satisfy these two conditions, hence ``most'' jacobians of
hyperelliptic curves have maximal monodromy modulo all but finitely
many primes (which may, a priori, depend on the polynomial, of
course!).
\par
More precisely, for $k$ and $\Z_k$ as above, let us denote
$$
\mathcal{F}_n=\{f\in \Z_k[X]\,\mid\, f\text{ is monic of degree $n$} \},
$$
and let the height be defined on $\mathcal{F}_n$ by
$$
H(a_0+a_1X+\cdots +a_{n-1}X^{n-1}+X^n)=
\max_{0\leq i\leq n-1}{H(a_i)},
$$
where $N$ is the norm from $k$ to $\Q$ and $H$ is any reasonable
height function on $k$, e.g., choose a $\Z$-basis $(\omega_i)_{1\leq
  i\leq d}$ of $\Z_k$, where $d=[k:\Q]$, and let
$$
H(\alpha_1\omega_1+\cdots +\alpha_d\omega_d)=\max |\alpha_i|,
$$
for all $(\alpha_i)\in\Z^d$. 
\par
Let $\mathcal{F}_n(T)$ denote the finite set
\begin{equation}\label{eq-base}
\mathcal{F}_n(T)=\{f\in \mathcal{F}_n\,\mid\, H(f)\leq T\}.
\end{equation}
\par
We have $|\mathcal{F}_n(T)|=N_k(T)^n$, 
where
$$
N_k(T)=|\{x\in \Z_k\,\mid\, H(x)\leq T\}|\asymp T^d,\text{ where }
d=[k:\Q]. 
$$
\par
Say that $f$ has {\it big monodromy} if the Galois group of its splitting
field is $\mathfrak{S}_n$.  We will show:

\begin{proposition}\label{pr-1}
Let $k$ and $\Z_k$ be as above. Then
\begin{gather*}
  |\{f\in\mathcal{F}_n(T)\,\mid\, \text{$f$ does not have big monodromy} \}|
      \ll N_k(T)^{n-1/2}(\log N_k(T)),
\end{gather*}
for all $T\geq 2$, where the implied constant depends on $k$ and $n$.
\end{proposition}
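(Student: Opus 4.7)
I would follow the classical large-sieve approach to the quantitative Hilbert irreducibility theorem---due to Gallagher for $\Z[X]$ and by now well understood over number fields---adapted to polynomials in $\Z_k[X]$.

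The link to sieve theory is Dedekind's theorem: for any prime ideal $\p\subset\Z_k$ not dividing $\mathrm{disc}(f)$, the factorisation type of $f\bmod\p$ in $\Fp_\p[X]$ (as a partition of $n$) coincides with the cycle type of a Frobenius at $\p$ acting on the roots of $f$ inside $\Gal(f)\le\mathfrak{S}_n$. By a classical theorem of Jordan, no proper subgroup $H\subsetneq\mathfrak{S}_n$ meets every conjugacy class of $\mathfrak{S}_n$, and there are only finitely many conjugacy classes of subgroups of $\mathfrak{S}_n$; hence I can fix, once and for all, a finite list of cycle types $\tau_1,\ldots,\tau_r$ of $\mathfrak{S}_n$ such that every proper $H\le\mathfrak{S}_n$ avoids at least one $\tau_i$. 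For a sieve level $L$ to be chosen later, this yields
$$\{f\in\mathcal{F}_n(T):\Gal(f)\ne\mathfrak{S}_n\}\subseteq\bigcup_{i=1}^{r}\mathcal{B}_i(T,L),$$
where $\mathcal{B}_i(T,L)$ denotes the set of $f\in\mathcal{F}_n(T)$ whose reduction modulo $\p$ fails to have factorisation type $\tau_i$ for every $\p\nmid\mathrm{disc}(f)$ with $N(\p)\le L$.

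Next, a standard count of monic polynomials in $\Fp_q[X]$ of degree $n$ by factorisation type shows that for each fixed $\tau_i$ the proportion of such polynomials of type $\tau_i$ equals $|\{\sigma\in\mathfrak{S}_n:\mathrm{type}(\sigma)=\tau_i\}|/n!+O(1/q)$, and in particular is bounded below by a positive constant $\delta_i>0$ once $q=N(\p)$ is sufficiently large. Applying a large sieve inequality for $\Z_k^n$ to each $\mathcal{B}_i(T,L)$ and invoking the prime ideal theorem $\sum_{N(\p)\le L}1\asymp L/\log L$ for $\Z_k$ then produces a bound of the shape
$$|\mathcal{B}_i(T,L)|\ll\frac{N_k(T)^n\log L}{L},$$
valid whenever $L$ lies in the admissible range of the sieve (so that its ``remainder'' term is dominated by its main term). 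Choosing $L\asymp N_k(T)^{1/2}$ balances the terms, and summing over the finitely many $\tau_i$ yields the claimed bound $\ll N_k(T)^{n-1/2}\log N_k(T)$.

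The main obstacle, I expect, is calibrating the large sieve in $\Z_k^n$ correctly: the number-field degree $d=[k:\Q]$ enters the admissible range for $L$ through the volume of the sieving box, and one must verify that $L\asymp N_k(T)^{1/2}$ indeed lies in this range (for the sup-norm height used to define $\mathcal{F}_n(T)$). This is a technical but by now routine adaptation of the scalar large sieve to $n$-tuples of algebraic integers. The remaining ingredients---Dedekind's theorem, the finite-field polynomial counts, Jordan's lemma on proper subgroups, and the prime ideal theorem for $\Z_k$---are entirely standard.
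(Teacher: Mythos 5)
Your plan is correct and is essentially the argument the paper points to: the paper cites Cohen's theorem and notes that the proposition is ``a simple application of the methods of Gallagher'' using the large sieve inequality~(\ref{eq-ls}), with Galois elements detected via factorization of $f$ modulo prime ideals. Your write-up fills in precisely those ingredients (Dedekind--Frobenius, Jordan's lemma that a proper subgroup misses some conjugacy class, the finite-field factorization-type densities, Landau's prime ideal theorem, and the choice $L\asymp N_k(T)^{1/2}$ balancing the sieve constant $N_k(T)^n+L^{2n}$), so the approach coincides with the paper's.
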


Say that $f\in\mathcal{F}_n$ has \emph{ordinary ramification} if it
satisfies condition (2') above.

\begin{proposition}\label{pr-2}
  Let $k$ and $\Z_k$ be as above, and assume $n\geq 2$.
  There exists a constant $c>0$,
  depending on $n$ and $k$, such that we have
$$
|\{f\in\mathcal{F}_n(T)\,\mid\, 
\text{$f$ does not have ordinary ramification}\}|
\ll \frac{N_k(T)^{n}}{(\log N_k(T))^{c}}
$$
for $T\geq 3$, where the implied constant depends on $k$ and $n$.
\end{proposition}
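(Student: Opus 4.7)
The plan is to apply an upper-bound sieve of Selberg type, exploiting the fact that for a random $f\in\mathcal{F}_n(T)$ the ordinary ramification condition (2') at a given prime $\p\subset\Z_k$ holds with density $\sim 1/N(\p)$. Summed over primes, this makes the proportion of $f$ failing (2') at \emph{every} prime decay logarithmically in $N_k(T)$.

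The first step is the local density. Fix a prime $\p$ of residue norm $q=N(\p)$ and let $E_\p\subset\Fp_\p^n$ denote the set of coefficient vectors of monic $\bar f\in\Fp_\p[X]$ of degree $n$ satisfying (2') at $\p$. Any such $\bar f$ admits a unique factorization $(X-\alpha)^2 g(X)$ with $\alpha\in\Fp_\p$ and $g$ monic squarefree of degree $n-2$ coprime to $X-\alpha$, whence
$$
|E_\p|=q\cdot\bigl|\{g\in\Fp_q[X]\,:\,g\text{ monic squarefree, }\deg g=n-2,\ g(\alpha)\neq 0\}\bigr|.
$$
A short inclusion--exclusion using the formula $q^m-q^{m-1}$ for the number of monic squarefree polynomials of degree $m\geq 2$ over $\Fp_q$ yields $|E_\p|=q^{n-1}(1+O_n(1/q))$, i.e.\ $E_\p$ has density $(1+O_n(1/q))/q$ in $\Fp_\p^n$.

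Next, apply a multi-dimensional Selberg $\Lambda^2$ sieve to sift $\mathcal{F}_n(T)\subset\Z_k^n$ by the sets $E_\p$ for primes with $N(\p)\leq z:=N_k(T)^\delta$, for a small $\delta>0$ to be chosen. This yields an upper bound
$$
\bigl|\{f\in\mathcal{F}_n(T)\,:\,f\bmod\p\notin E_\p\text{ for all }N(\p)\leq z\}\bigr|\ \leq\ \frac{|\mathcal{F}_n(T)|}{G(z)}+\mathcal{E},
$$
where
$$
G(z)=\sum_{\substack{\ideal{d}\subset\Z_k\text{ squarefree}\\ N(\ideal{d})\leq z^2}}\ \prod_{\p\mid\ideal{d}}\frac{\omega(\p)}{N(\p)-\omega(\p)},\qquad \omega(\p)=\frac{|E_\p|}{N(\p)^{n-1}}\longrightarrow 1.
$$
Because each local factor is $\sim 1/N(\p)$, the Mertens-type estimate $\sum_{\mu^2(\ideal{d})=1,\,N(\ideal{d})\leq X}N(\ideal{d})^{-1}\asymp_k\log X$ in $\Z_k$ yields $G(z)\gg_k\log z\asymp\log N_k(T)$, which is the source of the logarithmic savings.

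Finally, the error $\mathcal{E}=\sum_{N(\ideal{d})\leq z^2}3^{\omega(\ideal{d})}|R_\ideal{d}|$ is controlled by lattice-point counting in $\Z_k^n\cong\Z^{n d_k}$ (with $d_k=[k:\Q]$): the count of $f\in\mathcal{F}_n(T)$ in a prescribed residue class modulo $\ideal{d}$ differs from its expected value by $O(N_k(T)^{n-1})$ whenever $N(\ideal{d})\ll T^{d_k}$, so $\mathcal{E}\ll N_k(T)^{n-1}z^2(\log T)^{O(1)}$, which stays below the target $N_k(T)^n/\log N_k(T)$ provided $\delta<1/(2n)$. The main obstacle is precisely this balancing of parameters: because the per-prime density is only $\sim 1/N(\p)$, the sieve savings is limited to a single logarithm, leaving little room to absorb the lattice-point discrepancy, so the sieving level $z^2$ must be chosen delicately. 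Once $z$ is calibrated so that both pieces work out, the proposition follows with $c=1$ (or any positive $c<1$, to absorb the implied constants).
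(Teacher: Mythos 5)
Your proof takes a genuinely different route from the paper's. The paper applies the arithmetic large sieve inequality in the form~\eqref{eq-ls}, with Huxley's theorem supplying the large sieve constant $\Delta\ll N_k(T)^n+L^{2n}$ for number fields; this packages all the ``equidistribution in residue classes'' analysis into a single citation and leads cleanly to the lower bound $H\gg(\log L)^c$ by a Mertens-type estimate. You instead deploy a Selberg $\Lambda^2$ sieve, which is a legitimate alternative for a dimension-one sifting problem and which gives the same qualitative saving $G(z)\gg\log z$. Your local density computation is in fact slightly sharper than the paper's: by counting squarefree cofactors of degree $n-2$ rather than just irreducible ones, you get $|E_\p|/(N\p)^{n-1}\to 1$, so the eventual exponent $c$ can be taken equal to $1$ (or anything below $1$). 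The paper settles for the weaker lower bound $c<(n-2)^{-1}$ coming from irreducible cofactors and explicitly remarks that with more work one could push $c$ toward $1$; your density estimate realizes exactly that remark.

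The one place your argument is loose is the error term $\mathcal{E}$. You claim $\mathcal{E}\ll N_k(T)^{n-1}z^2(\log T)^{O(1)}$, which amounts to asserting a per-residue-class lattice discrepancy of $O(1)$ per coordinate. That is correct for $k=\Q$, where each coordinate $a_i$ lies in an interval of $\Z$ and counting $a_i\equiv c_i\pmod{\ideal{d}}$ has error $O(1)$. For a general number field with $d_k=[k:\Q]>1$, each coordinate lives in a box in $\Z^{d_k}$ and the error for counting points of the coset $c_i+\ideal{d}$ in that box is of size $\asymp(T/\lambda_1(\ideal{d}))^{d_k-1}$, which is $O(T^{d_k-1}/(N\ideal{d})^{(d_k-1)/d_k})$ rather than $O(1)$. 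Carrying this through gives $\mathcal{E}\ll T^{d_kn-1}z^{2/d_k}(\log T)^{O(1)}$, which is still $\ll N_k(T)^n/\log N_k(T)$ for $z$ up to about $N_k(T)^{1/2}$, so your parameter choice $z=N_k(T)^\delta$ with small $\delta$ still works and the proposition survives --- but the error bound as you stated it is not correct for $d_k>1$, and the balancing must be redone. This is precisely the bookkeeping that the paper avoids by invoking Huxley's large sieve theorem; if you want to run a Selberg sieve over a general $k$, the lattice-point discrepancy has to be handled with the successive minima of the ideals, not treated as if $k=\Q$.
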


Finally, say that $J_f$ has {\it big monodromy} if the image of
$\rho_{f,\ell}$ is as big as possible for almost all primes $\ell$.

\begin{corollary}
  Assume that $n\geq 2$. Then we have
$$
\lim_{T\ra +\infty}{
\frac{1}{|\mathcal{F}_n(T)|}{
|\{
f\in\mathcal{F}_n(T)\,\mid\,
J_f\text{ does not have big monodromy}
\}|
}
}=0.
$$
\end{corollary}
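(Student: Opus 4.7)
The plan is to deduce the corollary as an immediate consequence of Propositions \ref{pr-1} and \ref{pr-2}, using Hall's main theorem to pull back the condition ``$J_f$ has big monodromy'' to the two combinatorial conditions (1') and (2') on the polynomial $f$ itself.

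First I would observe that by Hall's theorem, together with Zarhin's result cited at the start of the appendix, any $f \in \mathcal{F}_n$ satisfying both (1') and (2') has $J_f$ with big monodromy: condition (1') implies condition (1) (the endomorphism ring is $\Z$) by Zarhin, and (2') was explicitly noted to imply (2), which in turn is hypothesis (T) of the main theorem applied to $A = J_f$. Consequently, the set of ``bad'' $f$ in $\mathcal{F}_n(T)$ for which $J_f$ fails to have big monodromy is contained in the union
$$
\{f \in \mathcal{F}_n(T) : f \text{ lacks big monodromy}\} \cup \{f \in \mathcal{F}_n(T) : f \text{ lacks ordinary ramification}\},
$$
i.e., the union of the two exceptional sets bounded by the two propositions.

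A straightforward union bound and the two propositions then yield
$$
|\{f \in \mathcal{F}_n(T) : J_f \text{ lacks big monodromy}\}| \ll N_k(T)^{n-1/2} \log N_k(T) + \frac{N_k(T)^n}{(\log N_k(T))^c}.
$$
Dividing by $|\mathcal{F}_n(T)| = N_k(T)^n$ bounds the ratio by $O\bigl(N_k(T)^{-1/2} \log N_k(T) + (\log N_k(T))^{-c}\bigr)$, which tends to $0$ as $T \to \infty$ because $N_k(T) \asymp T^d$ grows without bound. This gives the claimed limit.

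Since the corollary is a purely formal combination of the two propositions, I do not anticipate any substantive obstacle at this stage; the mathematical content lies entirely in Propositions \ref{pr-1} and \ref{pr-2}, whose proofs (via an effective form of Hilbert's irreducibility theorem and a Chebotarev/large-sieve type count, respectively) would constitute the technically demanding part of the appendix rather than the deduction of the corollary itself.
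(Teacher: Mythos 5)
Your proof is correct and is exactly the intended deduction: the paper states the corollary without proof, treating it as an immediate consequence of Propositions~\ref{pr-1} and~\ref{pr-2} via the implications (1')$\Rightarrow$(1) (Zarhin) and (2')$\Rightarrow$(2)$\Rightarrow$(T) together with Hall's main theorem, followed by a union bound and division by $|\mathcal{F}_n(T)|=N_k(T)^n$. The remark following the corollary, which attributes the rate of decay to the logarithmic power in Proposition~\ref{pr-2}, confirms that your estimate $O\bigl(N_k(T)^{-1/2}\log N_k(T)+(\log N_k(T))^{-c}\bigr)$ matches the authors' reasoning.
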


\begin{remark}
  Quantitatively, we have proved that the rate of decay of this
  probability is at least a small power of power of logarithm, because
  of Proposition~\ref{pr-2}.  With more work, one should be able to
  get $c$ equal or very close to $1$, but it seems hard to do better
  with the current ideas (the problem being in part that we must avoid
  $f$ for which the discriminant is a unit in $\Z_k$, which may well
  exist, and sieve can not detect them better than it does
  discriminants which generate prime ideals, the density of which
  could be expected to be about $(\log N_k(T))^{-1}$).
\end{remark}


For both propositions, in the language of~\cite{lsieve}, we consider a
sieve with data
\begin{gather*}
(\mathcal{F}_n,\{\text{prime ideals in $\Z_k$}\},
\{\text{reduction modulo  $\p$}\}),\quad
(\mathcal{F}_n(T),\text{counting measure}),
\end{gather*}
and we claim that the ``large sieve constant'' $\Delta$ for the
sifting range
$$
\mathcal{L}^*=\{\p\subset \Z_k\,\mid\, N\p\leq L\}
$$
satisfies
$$
\Delta\ll N_k(T)^{n}+L^{2n},
$$
where the implied constant depends only on $k$. Indeed, this follows
from the work of Huxley~\cite{huxley}, by combining in an obvious
manner his Theorem 2 (which is the case $n=1$, $k$ arbitrary) with his
Theorem 1 (which is the case $k=\Q$, $n$ arbitrary).
\par
Concretely, this implies that for arbitrary subsets
$\Omega_{\p}$ in the image of $\mathcal{F}_n$ under reduction
modulo $\p$ --- the latter is simply the set of monic polynomials of
degree $n$ in $\Fp_{\p}[X]$, and has cardinality
$(N\p)^n$ --- we have
\begin{equation}\label{eq-ls1}
|\{f\in\mathcal{F}(T)\,\mid\, f\mods{\p}\notin
\Omega_{\p}\text{ for } N\p\leq L\}|
\ll (N_k(T)^{n}+L^{2n})\Bigl(
\sumb_{N\ideal{a}\leq L}{
\prod_{\p\mid \ideal{a}}
{\frac{|\Omega_{\p}|}{(N\p)^n-|\Omega_{\p}|}
}}
\Bigr)^{-1},
\end{equation}
where the sum is over squarefree ideals in $\Z_k$ with norm at most
$L$, and therefore also
\begin{equation}\label{eq-ls}
|\{f\in\mathcal{F}(T)\,\mid\, f\mods{\p}\notin
\Omega_{\p}\text{ for } N\p\leq L\}|
\ll (N_k(T)^{n}+L^{2n})\Bigl(
\sum_{N\p\leq L}{
\frac{|\Omega_{\p}|}{(N\p)^n}
}
\Bigr)^{-1}.
\end{equation}

Proposition~\ref{pr-1} is a result of S.D. Cohen~\cite{cohen}; it is
also a simple application of the methods of Gallagher~\cite{gallagher}
(one only needs~(\ref{eq-ls}) here), the basic idea being that
elements of the Galois group of the splitting field of a polynomial
$f$ are detected using the factorization of $f$ modulo prime
ideals. We recall that the first quantitative result of this type (for
$k=\Q$) is due to van der Waerden~\cite{vdW}, whose weaker result
would be sufficient here (though the proof is not simpler than
Gallagher's). 

\begin{proof}[Proof of Proposition~\ref{pr-2}]
Let $\p\subset \Z_k$ be a prime ideal, and let
$\Omega_{\p}$ be the set of polynomials
$f\in\Fp_{\p}[X]$ which are monic of degree $n$ and factor as
described in Condition (2'). We claim that, for some constant $c>0$,
$c\leq 1$ (depending on $k$ and $n$), we have
\begin{equation}\label{eq-lower}
\frac{|\Omega_{\p}|}{(N\p)^n}\geq \frac{c}{N\p}
\end{equation}
for all prime ideals with norm $N\p\geq P_0$, for some $P_0$
depending on $k$ and $n$.
\par
Indeed, for $n\geq 4$, we have clearly
$$
|\Omega_{\p}|\geq (N\p)\times 
|\{
f\in \Fp_{\p}[X]\,\mid\, \deg(f)=n-2,\ \text{$f$ monic irreducible}
\}|;
$$
for $n=2$, this holds with the convention that $1$ is
irreducible of degree $0$, and for $n=3$, we must subtract 1 from the second
term on the right. If $n=2$, we are done, otherwise it is
well-known that
$$
|\{
f\in \Fp_{q}[X]\,\mid\, \deg(f)=n-2,\ \text{$f$ monic irreducible}
\}|\sim \frac{q^{n-2}}{n-2}
$$
as $q\ra +\infty$, hence the lower bound~(\ref{eq-lower}) follows by
combining these two facts (showing we can take for $c$ any constant
$<(n-2)^{-1}$ if $P_0$ is chosen large enough; using more complicated
factorizations of the squarefree factor of degree $n-2$, one could get
$c$ arbitrarily close to $1$).
\par
Now we apply~(\ref{eq-ls}) with this choice of subsets for $\p$
with norm $>P_0$, and with $\Omega_{\p}=\emptyset$ for other
$\p$. We take $L=N_k(T)^{d/2}$, assuming that $L>P_0$, i.e.,
that $T$ is large enough. Since, if $f\in\mathcal{F}_n(T)$ does not
have ordinary ramification, we have by definition
$f\mods{\p}\notin \Omega_{\p}$ for any $\p$, it
follows by simple computations that
$$
|\{f\in\mathcal{F}_n(T)\,\mid\,\text{$f$ does not have ordinary
  ramification} \}|
\ll N_k(T)^{n}
H^{-1}
$$
where the implied constant depends on $k$ and
$$
H=\sumb_{N\ideal{a}\leq L}{
c^{\omega(\ideal{a})}(N\ideal{a})^{-1}},
$$
where now $\sumb$ restricts the sum to squarefree ideals not divisible
by a prime ideal of norm $\leq P_0$, and where $\omega(\ideal{a})$ is
the number of prime ideals dividing $\ideal{a}$. 
\par
It is then a standard fact about sums of multiplicative functions that 
$$
H\gg (\log L)^{c}
$$
for $L$ large enough (depending on $P_0$; recall that $0<c\leq 1$),
and this leads to the proposition, since $L$ and $N_k(T)$ are
comparable in logarithmic scale.
\end{proof}

\end{document}